\documentclass[letterpaper,11pt]{article}
\usepackage[margin=1in]{geometry}
\usepackage[T1]{fontenc}    
\usepackage{lmodern}
\usepackage{microtype}
\usepackage{multirow}
\usepackage{wrapfig}
\usepackage{booktabs}       
\usepackage{enumerate}
\usepackage{amsmath}
\usepackage{lipsum}
\usepackage{mathtools}
\usepackage{float}
\usepackage[dvipsnames,table,xcdraw]{xcolor}
\usepackage{bbm}
\usepackage{varioref}
\usepackage[hidelinks]{hyperref}
\usepackage{float}
\usepackage{placeins}
                                
\usepackage{amssymb} 
\usepackage{rotating}
\usepackage{graphicx}
\usepackage{algpseudocode}
\usepackage{mathrsfs}  
\usepackage{algcompatible}
\usepackage{amsmath}
\usepackage{cases}
\usepackage{graphicx}
\usepackage{multirow}
\usepackage{comment}
\usepackage{subcaption}
\usepackage[normalem]{ulem}

\usepackage{amsthm}
\DeclareUnicodeCharacter{00A0}{~}
\usepackage{algorithm}
\usepackage{algpseudocode}
\algnewcommand{\Initialize}[1]{%
	\State \textbf{Initialization:}
	\Statex {\raggedright #1}
}
\usepackage{xcolor} 

\usepackage{tcolorbox}

\usepackage{graphicx} 
\usepackage{amsmath}
\usepackage{amsthm}
\usepackage{amsfonts}
\usepackage{amssymb}
\usepackage{extpfeil}  
\usepackage{comment}
\usepackage{algorithm}
\usepackage{algpseudocodex}
\usepackage{enumerate}
\usepackage{enumitem}
\usepackage{booktabs, multirow} 
\usepackage{soul}
\usepackage{xcolor,colortbl} 
\usepackage{changepage,threeparttable} 
\usepackage[normalem]{ulem}
\usepackage{tcolorbox}
\usepackage{makecell}

\usepackage[textsize=footnotesize,textwidth=0.8in]{todonotes}

\definecolor{aqua}{rgb}{0.0, 0.5, 1.0}

\newcolumntype{L}[1]{>{\raggedright\arraybackslash}p{#1}}

\usepackage{xcolor}

\newcommand{\prox}{\mathrm{prox}}
\newcommand{\dom}{\mathrm{dom}}
\newcommand{\argmin}[1]{\operatorname*{argmin}_{#1}}

\def\R{\mathbb{R}}
\def\N{\mathbb{N}}

\def\E{\mathbb{E}}

\def\tf{\tilde{f}}
\def\tg{\tilde{g}}
\def\sF{\mathcal{F}}
\def\Real{\mathbb{R}}
\usepackage{bm}
\usepackage{tcolorbox}
\newcommand{\bxi}{\boldsymbol{\xi}}

\allowdisplaybreaks
\usepackage{tikz}
\usetikzlibrary{decorations.pathreplacing}

\NewDocumentEnvironment{alignb}{b}{%
  \begin{align*}
  \refstepcounter{equation} #1 \tag{\theequation}
  \end{align*}
}{\ignorespacesafterend}

\allowdisplaybreaks

\usepackage{pgfplots}
\pgfplotsset{compat=1.18}

\theoremstyle{plain}%

\newtheorem{theorem}{Theorem}

\newtheorem{lemma}{Lemma}
\newtheorem{remark}{Remark}

\newtheorem{assumption}{Assumption}

\usepackage[sort&compress]{natbib}
 \bibpunct[, ]{[}{]}{,}{n}{}{,}%

\title{A Stochastic LineseArch Method (SLAM) for
Minimizing Expectation Residuals: Weakening Boundedness of Gradient}
 
\author{Qi Wang\thanks{Department of Industrial and Operations Engineering, University of Michigan, United States. {\tt\small  qiwangqi@umich.edu}} \and Uday V. Shanbhag\thanks{Department of Industrial and Operations Engineering, University of Michigan, United States. {\tt\small  udaybag@umich.edu}
} \and Yue Xie\thanks{Musketeers Foundation Institute of Data Science and Department of Mathematics, University of Hong Kong, China. {{\tt\small yxie21@hku.hk}}
} 
}
\date{}

\begin{document}
\pagestyle{plain}
\maketitle
\begin{abstract}
We establish convergence guarantees for the Stochastic LineseArch Method
(\textbf{SLAM}) proposed and analyzed in \cite{Wang2025} for nonconvex composite
stochastic optimization, retaining the iteration and sample complexity orders of
the original analysis assuming neither the boundedness of  true gradients nor the Lipschitz continuity of the convex nonsmooth term $r$. \texttt{ChatGPT} was used to assist in
developing a key intermediate result. Under uniformly Lipschitz continuous and unbiased sampled
gradients 
with uniformly bounded variance with
mini-batch sizes proportional to the iteration budget, \textbf{SLAM}
achieves an iteration complexity $\mathcal{O}(\epsilon^{-2})$ and a
sample complexity $\mathcal{O}(\epsilon^{-4})$ for obtaining an iterate whose
expected proximal-gradient residual norm is at most $\epsilon$.
\end{abstract}

\noindent\textbf{Keywords:} Stochastic gradient methods; stochastic linesearch; parameter-free methods; bounded gradient assumption.

\section{Introduction}\label{sec:intro}
Consider minimizing the nonsmooth function $\phi$ over $\R^n$, as specified by 
\begin{align}
\min_{x \in \R^n} \ \phi(x) \, \triangleq \, f(x) + r(x), \quad \text{where } f(x) \, \triangleq \, \E \left[\, F(x, \bxi)\, \right], \label{eq:composite_opt}
\end{align}
where $\bxi: \Omega \to \Xi \subseteq  \mathbb{R}^d$ denotes a random
variable with realizations denoted by $\xi$,  $\Xi \,
\triangleq \, \left\{ \, \xi(\omega) \, \mid \, \omega \, \in
\, \Omega\,\right\}$,  $F: \Real^{n}  \times \Xi \to \Real$
is a real-valued function, $(\Omega,\mathcal{F},\mathbb{P})$ denotes the associated
probability space, and $r:\R^n \to \R\cup \left\{\, +\infty\, \right\}$ is a proper, 
closed, and convex function. In our recent work~\cite{Wang2025} we present a Stochastic LineseArch Method (\textbf{SLAM}), where we propose and analyze a proximal stochastic gradient method equipped with  a linesearch. The scheme is equipped with an  iteration complexity $\mathcal{O}(\epsilon^{-2})$ and sample complexity $\mathcal{O}(\epsilon^{-4})$ for obtaining an iterate whose expected norm of the proximal-gradient residual is at most $\epsilon$. However, our original analysis requires two additional assumptions on problem~\eqref{eq:composite_opt}: (i) \underline{\em Bounded gradients:} the true gradient $\nabla f$ is uniformly bounded on $\dom(r)$, and (ii) \underline{\em Lipschitz continuity of $r$:} the nonsmooth term $r$ is Lipschitz continuous on $\dom(r)$. Naturally, weakening both assumptions allows for contending with a far greater range of problems, motivating this note. 

The main purpose of this paper is to establish convergence guarantees for \textbf{SLAM}~\cite{Wang2025} without the two aforementioned assumptions, thereby extending its theoretical applicability to a broader class of nonconvex composite stochastic optimization problems while preserving the iteration and sample complexity guarantees  
presented in the original analysis. Moreover, the resulting complexity bounds hold for any prescribed sequence of positive integer stepsize-resetting periods, without requiring the period lengths to depend on the iteration budget.\\

\noindent \textbf{Use of AI.} We submitted our paper to ChatGPT (ChatGPT 6 Astra) to assist in developing the proof; it provided the main idea underlying Lemma~\ref{lem.gptproof}. Naturally the authors are responsible for verifying all mathematical arguments and for the final statements and proofs. The convergence guarantees and complexity bounds in Theorem~\ref{thm:convergence} follow directly from this lemma.\\

\textbf{Notation} We use $\R$ and $\N$ to denote the set of real
numbers and the set of nonnegative integers, respectively. We use $\dom(r)$
to denote the effective domain of function $r$, i.e., $\dom(r) \triangleq
\{x \in \R^n \mid r(x) < \infty\}$.  $\|\bullet\|$ denotes the Euclidean norm,
$\lceil \bullet \rceil$ denotes the ceiling function.

\section{Assumptions and Algorithm} \label{sec:2}
 
Our convergence analysis relies on Assumptions~\ref{ass:prox}, \ref{ass: F}, and \ref{ass: var}. Assumptions~\ref{ass: F} and \ref{ass: var} coincide with \cite[Assumptions~2 and~3]{Wang2025}. In contrast, \textbf{Assumption~\ref{ass:prox} requires neither Lipschitz continuity of $r$ nor boundedness of the true gradient $\nabla f$}, both of which are required in \cite[Assumption~1]{Wang2025}. The analysis therefore applies to a broader class of problems than that considered in \cite{Wang2025}.
\begin{assumption}\label{ass:prox} \em
The function $\phi$ in \eqref{eq:composite_opt} satisfies the following properties.
\begin{enumerate}
    \item[(i)] The function $r:\R^n \to (-\infty, \infty]$ is proper, closed, and convex.
    \item[(ii)]  The function $f$ is proper and closed, $\dom(f)$ is convex, $\dom(r) \, \subseteq \, \mathrm{int(\dom(}f))$, and $f$ is continuously differentiable on $\mathrm{int(\dom(}f))$.  
    \item[(iii)] The function $\phi$ is lower bounded by $\phi_{\inf}$ on $\dom(r)$, where $\phi_{\inf} \, \triangleq \, \inf \left\{ \, \phi(x) \, \mid \, x \in \dom(r)\, \right\}$.$\hfill \Box$
\end{enumerate}
\end{assumption}

\begin{assumption}\label{ass: F} \em
    For every $\xi \in \Xi$, $F(\bullet,\xi): \mathrm{int(\dom(}f)) \to \R$ is continuously differentiable and its gradient is Lipschitz continuous with constant $L$ on $\dom(r)$. $\hfill \Box$
\end{assumption}

In the next assumption, we provide a definition and assumption on the stochastic zeroth and first-order oracles.
\begin{assumption}[Stochastic zeroth and first-order oracles] \label{ass: var}
    For any given realization $\tilde{\xi} \in \Xi$ and $x \in \mbox{dom}(r)$, the stochastic oracles provide $F(x,\tilde{\xi})$ and $\nabla F(x,\tilde{\xi})$. In addition, the oracle also generates the map $F(\bullet,\tilde{\xi})$, i.e., the  oracle provides $F(u,\tilde{\xi})$ for any $u \, \in \, \mbox{dom}(r)$.
Moreover, there exists a constant $\sigma > 0$ such that, for every $x \in \dom(r)$, the following hold almost surely.
    \begin{align*}
        \E\left[\, F(x,\bxi)\mid x \, \right] \, = \, f(x), \,\, \E\left[\, \nabla F(x,\bxi)\mid x \,\right] \, = \, g(x), \,\, \text{and} \,\, \E\left[\, \| \nabla F(x,\bxi) - g(x) \|^2 \mid x\, \right] \, \le \, \sigma^2. 
    \end{align*} $\hfill \Box$
\end{assumption}

We  denote the (true) gradient map by $g$, i.e., $g(x) \triangleq \nabla f(x)$ for any $x \, \in \, \mbox{dom}(r)$.  Following~\cite{Wang2025}, at iteration $k=0,1,\ldots$, we use a batch of $N_k$ i.i.d. samples $\xi_{k,1},\ldots,\xi_{k,N_k}$, where $N_k$ is a positive integer and the batch is drawn independently of the history $\sF_k$ defined below. The sample-mean objective and gradient are
\begin{align}\label{sampledfg}
    \tf_k(\bullet) \, \triangleq \, \frac{1}{N_k} \sum_{i = 1}^{N_k} F(\bullet,\xi_{k,i})\quad \text{ and } \quad \tg_k(\bullet) \, \triangleq \, \frac{1}{N_k} \sum_{i = 1}^{N_k} \nabla F(\bullet,\xi_{k,i}),
\end{align}
respectively. Correspondingly, $\tilde{\phi}_k$ is defined as $\tilde{\phi}_k(\bullet) \triangleq \tf_k(\bullet) + r(\bullet)$.
At the current iterate $x_k\in\dom(r)$, Assumption~\ref{ass: var} implies that the following hold almost surely:
    \begin{align*}
        \E\left[\, \tf_k(x_k)\, \big|\, x_k\, \right] \, & = \, f(x_k),\  \E\left[\tg_k(x_k)\, \big|\, x_k\right] = g(x_k), \text{ and}\;  \\
        \E\left[\, \|\tg_k(x_k) - g(x_k)\|^2\, \big|\, x_k\, \right] \, & \le \, \frac{\sigma^2}{N_k}. \,\, 
    \end{align*}
We employ a proximal gradient method to handle the nonsmooth term $r$ in \eqref{eq:composite_opt}. For $t > 0$, the proximal operator of $tr$ is defined as
\begin{align*}
    \prox_{tr}(x) \triangleq  \argmin{y \in \R^n} \left\{ r(y) + \frac{1}{2t} \| y - x \|^2 \right\} = \argmin{y \in \R^n} \left\{ tr(y) + \frac{1}{2} \| y - x \|^2 \right\}.
\end{align*}
We reproduce \textbf{SLAM}~\cite[Algorithm~1]{Wang2025} in Algorithm~\ref{alg: sls pk prox} for completeness. At iteration $k$, \textbf{SLAM} employs a backtracking procedure to determine the stepsize $t_k$, recomputing the trial point $x_k(t_k)$, defined as  \begin{align} \label{df:x_k}
x_k(t_k) \, \triangleq\, \prox_{t_k r}(x_k - t_k \tg_k(x_k)),
\end{align}
until the following condition holds:
\begin{align}
    \tilde{\phi}_k(x_{k}(t_k) ) - \tilde{\phi}_k (x_k) \le - \frac{\alpha}{t_k} \|x_k - x_k(t_k)\|^2. \label{eq.prox.armijo}
\end{align}
As in \cite{Wang2025}, for a point $x \in \dom(r)$ and a stepsize $t > 0$, we define the stationarity residual as
\begin{align} \label{eq.G.prox}
    G_{t}(x) \, \triangleq \, \frac{1}{t}\left(\, x - \prox_{tr}(x - t g(x))\, \right)
\end{align}
while its stochastic counterpart, based on using a sample mean $\tilde{g}_k(x)$ constructed using a batch of size $N_k$ drawn at iteration $k$, as
\begin{align*}
    \tilde G^k_{t}(x) \, \triangleq \, \frac{1}{t}\left(\, x - \prox_{tr}(x - t \tg_k(x))\, \right).
\end{align*}
With the above notation, the stochastic Armijo condition \eqref{eq.prox.armijo} at step $k$ is equivalent to 
\begin{align} \label{eq.prox.armijo.G}
    \tilde \phi_k(x_{k}(t_k)) - \tilde \phi_k(x_k) \, \le \, -\alpha t_k \left\|\, \tilde G^k_{t_k}(x_k)\, \right\|^2.
\end{align}
We will derive convergence guarantees and rates for the stationarity metric  $\|G_s(x_{R_K})\|$, where $R_K$ denotes an index randomly selected via a uniform distribution from
$\left\{\, 0,1,\cdots,K-1\, \right\}$, while $s$ denotes the user-defined initial steplength. Finally, we define the history $\sF_k$  generated by the Algorithm~\ref{alg: sls pk prox} at iteration $k$ as 
\begin{align*}
    \sF_0 \, \triangleq \, \sigma(x_0)\, \text{ and }\,\, \sF_k \, \triangleq \, \sigma\left( \, x_0,\, (\xi_{0,j})_{j=1}^{N_0}, (\xi_{1,j})_{j=1}^{N_1}, \cdots, (\xi_{k-1,j})_{j=1}^{N_{k-1}} \, \right), \quad k \ge 1.
\end{align*}

\begin{tcolorbox}
\begin{algorithm}[H]\caption{Stochastic LineseArch  Method ({\bf SLAM}) \cite[Algorithm~1]{Wang2025}}\label{alg: sls pk prox}
    \begin{algorithmic}[1]
        \Require $x_0 \in \dom(r),~\alpha \in (0,1),~\beta \in (0,1),~\{p_j\}$ where $p_j \in \N$, $s \in {(0, \infty)},~\{N_k\}$ where $N_k \in \N$.
        \For{ $k\in\{0, 1, \ldots, K-1\}$}
        \State Generate samples $\{\xi_{k,j}\}_{j=1}^{N_k}$ and define $ \tf_k(\bullet)$ and $\tg_k(\bullet)$ as in \eqref{sampledfg};
        \If{$k$ initiates a new period in $\{p_j\}$} 
        \State Set $t_{k}^{\mathrm{init}} \gets s$;
        \Else
        \State Set $t_{k}^{\mathrm{init}} \gets t_{k-1}$;  \label{line.init_old}
        \EndIf 
        \State Set $t_k \gets t_{k}^{\mathrm{init}}$;
        \While{ {\eqref{eq.prox.armijo} is not satisfied}} \label{line.backtrack}
        \State Set $t_k \gets \beta t_k$; \label{line.prox backtrack}
        \EndWhile
        \State Update $x_{k+1} \gets x_k(t_k)$; 
        \EndFor
    \end{algorithmic}
\end{algorithm}
\end{tcolorbox}
\begin{remark}[Motivation of designing the period sequence $\{p_j\}$]
    To reduce the cost of objective evaluations during backtracking at Line~\ref{line.backtrack} of Algorithm~\ref{alg: sls pk prox}, \textbf{SLAM} reuses $t_{k-1}$ as the initial stepsize at iteration $k$, as in Line~\ref{line.init_old}. However, repeatedly reusing the accepted stepsize may prevent the method from trying larger steps. A stepsize reduced at an earlier iteration may be unnecessarily conservative at later iterates, where a larger stepsize may satisfy the sampled Armijo condition and allow faster progress. The prescribed period sequence $\{p_j\}$ allows \textbf{SLAM} to restart the linesearch from $s$ at the beginning of each period.
\end{remark}
\begin{remark}[Input parameters of Algorithm~\ref{alg: sls pk prox}]
    The requirements on $x_0$, $\alpha$, $\beta$, $s$, and $\{N_k\}$ in Algorithm~\ref{alg: sls pk prox} are the same as in \cite{Wang2025}. In particular, the complexity result uses a fixed batch size proportional to the iteration budget $K$. However, \textbf{our analysis allows any prescribed sequence of positive integer stepsize-resetting period lengths $\{p_j\}$}. In contrast, the iteration and sample complexity bounds in \cite{Wang2025} depend on the period lengths, whose choices must be tied to the iteration budget to obtain the stated complexity orders.
\end{remark}
\section{Convergence analysis} \label{sec:3}

Lemma~\ref{lm.prox:t_lb} and parts~(i) and~(ii) of Lemma~\ref{lm:bounds_G} were proven in \cite{Wang2025} without using boundedness of the true gradient or Lipschitz continuity of $r$. We restate these results for completeness.
\begin{lemma}[Bounds on $t_k$]\label{lm.prox:t_lb}\em
    Consider the sequences generated by Algorithm~\ref{alg: sls pk prox} and suppose that Assumptions~\ref{ass:prox}, \ref{ass: F}, and \ref{ass: var} hold. Then, for any $k \in \N$, 
    \begin{align}
        \min\left\{\, s,\frac{2(1-\alpha)\beta}{L}\, \right\} \, \triangleq \,  c \, \le \, t_k \, \le \, s. \label{eq:prox.t_bound}
    \end{align}
\end{lemma}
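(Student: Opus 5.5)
The upper bound $t_k \le s$ follows by induction on $k$. Every trial stepsize is either $s$ (at the start of a period) or $t_{k-1}$ (otherwise), and backtracking only multiplies by $\beta\in(0,1)$. So if $t_{k-1}\le s$, then $t_k^{\mathrm{init}}\le s$, and hence $t_k\le t_k^{\mathrm{init}}\le s$. The base case $k=0$ starts a period, so $t_0^{\mathrm{init}}=s$.

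For the lower bound, the key claim is deterministic and holds for every realization of the samples: the sampled Armijo condition \eqref{eq.prox.armijo} is satisfied for every $t\in(0,\frac{2(1-\alpha)}{L}]$. To prove it, fix $k$ and write $x_k^+=x_k(t)$. By Assumption~\ref{ass: F}, each $\nabla F(\bullet,\xi_{k,i})$ is $L$-Lipschitz on $\dom(r)$, so the average $\tg_k=\nabla\tf_k$ is $L$-Lipschitz there too. Both $x_k$ and $x_k^+$ lie in $\dom(r)$, which is convex, so the segment between them lies in $\dom(r)\subseteq\mathrm{int}(\dom f)$, where $\tf_k$ is differentiable. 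The descent lemma then gives
\[
\tf_k(x_k^+)\le \tf_k(x_k)+\tg_k(x_k)^\top(x_k^+-x_k)+\tfrac{L}{2}\|x_k^+-x_k\|^2 .
\]
The optimality condition of the prox step, $\frac{1}{t}(x_k-t\tg_k(x_k)-x_k^+)\in\partial r(x_k^+)$, together with convexity of $r$ gives
\[
r(x_k^+)\le r(x_k)-\tg_k(x_k)^\top(x_k^+-x_k)-\tfrac1t\|x_k^+-x_k\|^2 .
\]
Adding the two inequalities yields
\[
\tilde\phi_k(x_k^+)-\tilde\phi_k(x_k)\le-\Big(\tfrac1t-\tfrac L2\Big)\|x_k-x_k^+\|^2 .
\]
Finally, $\frac1t-\frac L2\ge\frac{\alpha}{t}$ is equivalent to $t\le\frac{2(1-\alpha)}{L}$, which proves the claim.

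The lower bound then follows by induction: if $t_k^{\mathrm{init}}\ge c$, then $t_k\ge c$. The only subtle point is showing that backtracking cannot overshoot below $c$. If $t_k=t_k^{\mathrm{init}}$, then $t_k\ge c$ directly. Otherwise, the last rejected trial $t_k/\beta$ must exceed $\frac{2(1-\alpha)}{L}$, since by the claim every trial at or below that threshold is accepted. Hence $t_k>\frac{2(1-\alpha)\beta}{L}\ge c$. For the inductive step, $t_k^{\mathrm{init}}$ is either $s\ge c$ or $t_{k-1}\ge c$ by the inductive hypothesis, so $t_k^{\mathrm{init}}\ge c$ in all cases. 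The claim also shows that the backtracking loop terminates, since it stops once the trial stepsize is at most $\frac{2(1-\alpha)}{L}$.

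The main obstacle is making sure the descent lemma is legitimately applied on $\dom(r)$ without any boundedness assumption. This needs only that the sampled gradient is Lipschitz along the segment between $x_k$ and $x_k^+$, which holds because $\dom(r)$ is convex. The argument never uses $g$, boundedness of $\nabla f$, or Lipschitz continuity of $r$, which is consistent with the paper's remark that this lemma carries over unchanged.
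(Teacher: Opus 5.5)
Your proposal is correct and follows essentially the same route as the proof the paper defers to (\cite[Lemma~3]{Wang2025}). The sampled descent lemma combined with the prox optimality inequality shows that the sampled Armijo test accepts every $t \le 2(1-\alpha)/L$, and induction over the reset/reuse initialization and the backtracking rule then gives $c \le t_k \le s$. You also checked that the segment $[x_k, x_k(t)]$ stays in the convex set $\dom(r)$, where the sampled gradient is $L$-Lipschitz; this is why the lemma is unaffected by dropping the bounded-gradient and Lipschitz-$r$ assumptions.
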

\begin{proof}
    See proof of  \cite[Lemma~3]{Wang2025}.
\end{proof}

\begin{lemma}\label{lm:bounds_G} \em Consider Algorithm~\ref{alg: sls pk prox}. Suppose that Assumption~\ref{ass:prox} holds and $k \in \N$ and $x \in \dom(r)$.    
\begin{enumerate}[label=(\roman*)]
    \item \label{lem3.i} For any $\eta_1 \ge \eta_2 > 0$, we have that 
    \begin{align} \label{rand-resid1}
        \left\|\, \tilde{G}^k_{\eta_2}(x)\, \right\| \, \ge \, \left\|\, \tilde{G}^k_{\eta_1}(x)\, \right\|
    \end{align}
    and 
    \begin{align}\label{rand-resid2}
        \eta_2\left\|\, \tilde{G}^k_{\eta_2}(x)\, \right\| \, \le \, \eta_1\left\|\, \tilde{G}^k_{\eta_1}(x)\, \right\|.
    \end{align}
    The analogs of \eqref{rand-resid1} and \eqref{rand-resid2} also hold for the true residuals $G_{\eta_1}$ and $G_{\eta_2}$, respectively.
    \item \label{lem3.ii} For any $t > 0$,
    \begin{align*}
        - 2\|\tilde{G}^k_t(x) \|^2 & \le -\| \, G_t(x) \, \|^2 + 2 \| g(x) -  \tilde{g}_k(x)\|^2  .  \end{align*}
\end{enumerate}
\end{lemma}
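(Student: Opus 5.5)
Part (i) is the classical monotonicity of the proximal-gradient residual with respect to the stepsize. I would prove it for a generic vector $v$ in place of $\tilde g_k(x)$ or $g(x)$, since only the fixed direction matters. Fix $x\in\dom(r)$ and $v\in\R^n$, and set $p(t) \triangleq \prox_{tr}(x - tv)$ and $d(t) \triangleq x - p(t)$, so that $\tilde G^k_t(x) = d(t)/t$ with $v = \tilde g_k(x)$, and $G_t(x)=d(t)/t$ with $v=g(x)$. The optimality condition for the prox gives
\begin{align*}
\frac{d(t)}{t} - v \in \partial r(p(t)).
\end{align*}
Writing this at $t=\eta_1$ and $t=\eta_2$ and using monotonicity of $\partial r$ (Assumption~\ref{ass:prox}(i)), we get
\begin{align*}
\left\langle \frac{d(\eta_1)}{\eta_1} - \frac{d(\eta_2)}{\eta_2},\, d(\eta_2) - d(\eta_1)\right\rangle \ge 0 .
\end{align*}
Expanding yields
\begin{align*}
\left(\frac{1}{\eta_1}+\frac{1}{\eta_2}\right)\langle d_1,d_2\rangle \ge \frac{\|d_1\|^2}{\eta_1} + \frac{\|d_2\|^2}{\eta_2},
\end{align*}
where $d_i = d(\eta_i)$. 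Bounding $\langle d_1,d_2\rangle \le \|d_1\|\|d_2\|$ and setting $a=\|d_1\|$, $b=\|d_2\|$ gives
\begin{align*}
\frac{(a-b)(\eta_2 a - \eta_1 b)}{\eta_1\eta_2}\le 0 .
\end{align*}
Thus $\min\{a,b\}\le\dots$; more precisely, $a$ and $b$ lie between $b$ and $(\eta_1/\eta_2)b$, i.e. $b\le a\le (\eta_1/\eta_2) b$. Here $\eta_1\ge\eta_2$ excludes the ordering in which $a<b$: if $a<b$, then the first factor is negative while $\eta_2 a < \eta_1 b$ makes the second factor negative too, so the product is positive, a contradiction. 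The inequality $a \ge b$ is exactly \eqref{rand-resid2}, and $a\le (\eta_1/\eta_2)b$ divided by $\eta_1$ is exactly \eqref{rand-resid1}. Since $v$ was arbitrary, the same computation with $v=g(x)$ gives the analogs for $G_{\eta_1}, G_{\eta_2}$.

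For part (ii), I would use nonexpansiveness of $\prox_{tr}$ to compare the two residuals at the same $t$:
\begin{align*}
\|G_t(x) - \tilde G^k_t(x)\| = \frac{1}{t}\left\|\prox_{tr}(x - t\tilde g_k(x)) - \prox_{tr}(x-tg(x))\right\| \le \|g(x)-\tilde g_k(x)\|.
\end{align*}
Then
\begin{align*}
\|G_t(x)\|^2 \le 2\|\tilde G^k_t(x)\|^2 + 2\|G_t(x)-\tilde G^k_t(x)\|^2
\end{align*}
by $\|a+b\|^2\le 2\|a\|^2+2\|b\|^2$, and rearranging gives the claim.

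The only delicate step is the sign argument in part (i), which extracts both inequalities from the single product inequality. I expect this to go through cleanly once the case $a<b$ is ruled out using $\eta_1\ge\eta_2$, as above. Neither part uses boundedness of $g$ or Lipschitz continuity of $r$; only convexity and closedness of $r$ are needed, which guarantee that the prox is single-valued and nonexpansive and that $\partial r$ is monotone.
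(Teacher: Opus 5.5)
Your proof is correct. Applying monotonicity of $\partial r$ to the two prox optimality conditions gives $(a-b)(\eta_2 a-\eta_1 b)\le 0$, hence $b\le a\le(\eta_1/\eta_2)b$, which are exactly \eqref{rand-resid2} and \eqref{rand-resid1}. Part (ii) follows from nonexpansiveness of $\prox_{tr}$ and $\|u+w\|^2\le 2\|u\|^2+2\|w\|^2$. The paper does not reprove this lemma; it defers to \cite[Lemma~5]{Wang2025}, and your argument is the standard self-contained proof of that result. One wording fix: the dangling phrase ``$\min\{a,b\}\le\dots$'' should be replaced by the observation that a quadratic in $a$ with leading coefficient $\eta_2>0$ is nonpositive only between its roots $b\le(\eta_1/\eta_2)b$, which gives both bounds at once.
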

\begin{proof}
    See proof of \cite[Lemma~5]{Wang2025}
\end{proof}

The following lemma provides an alternative to \cite[Proposition~1-2]{Wang2025} without the bounded true-gradient assumption or the assumption of Lipschitz continuity of the nonsmooth term. 
\begin{lemma}\label{lem.gptproof}
Consider the sequences generated by Algorithm~\ref{alg: sls pk prox} and suppose that Assumptions~\ref{ass:prox}, \ref{ass: F}, and \ref{ass: var} hold. Define $\Gamma$ as
    \begin{align*}
        \Gamma \, \triangleq \, \alpha c + \sqrt{1 + \lfloor \log_{\beta}(c/s)\rfloor} s + Ls^2 + \frac{(1 + \lfloor \log_{\beta}(c/s)\rfloor) s^2}{\alpha c}.
    \end{align*}
    Then for any $k \in \N$, the following holds.
\begin{align*}
        \E\left[\, \phi(x_{k+1}) - \phi(x_k) \, \mid \, \mathcal{F}_k \, \right] &\le - \frac{\alpha  c \left\|\, G_s(x_k)\, \right\|^2 }{4}  +  \frac{\Gamma \sigma^2}{N_k}.
\end{align*}
\end{lemma}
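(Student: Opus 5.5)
The plan is to compare $\phi$ with the sampled objective $\tilde\phi_k$ through the error function $h_k \triangleq f - \tf_k$, so that
\begin{align*}
\phi(x_{k+1}) - \phi(x_k) = \big[\tilde\phi_k(x_{k+1}) - \tilde\phi_k(x_k)\big] + \big[h_k(x_{k+1}) - h_k(x_k)\big].
\end{align*}

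\textbf{Step 1: the sampled-decrease bracket.} I use the Armijo condition \eqref{eq.prox.armijo.G}, then Lemma~\ref{lm:bounds_G}\ref{lem3.i} with $t_k \le s$, then $t_k \ge c$ from Lemma~\ref{lm.prox:t_lb}. This gives
\begin{align*}
-\alpha t_k\|\tilde G^k_{t_k}(x_k)\|^2 \le -\alpha c\|\tilde G^k_s(x_k)\|^2 .
\end{align*}
Applying Lemma~\ref{lm:bounds_G}\ref{lem3.ii} with $t=s$ and taking conditional expectations, the first bracket is at most $-\tfrac{\alpha c}{2}\|G_s(x_k)\|^2 + \alpha c\,\sigma^2/N_k$. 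This yields the $\alpha c$ term of $\Gamma$ and leaves a budget of $\tfrac{\alpha c}{4}\|G_s(x_k)\|^2$ for the error bracket.

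\textbf{Step 2: setting up the error bracket with ghost points.} The difficulty is that $x_{k+1}$ depends on the same samples as $h_k$, so $\E[h_k(x_{k+1}) - h_k(x_k)\mid\sF_k]\neq 0$. By Lemma~\ref{lm.prox:t_lb}, $t_k$ takes at most $M \triangleq 1+\lfloor\log_\beta(c/s)\rfloor$ values $t^{(i)} = s\beta^{i}$. For each such value I introduce the $\sF_k$-measurable ghost point $\bar x^{(i)} \triangleq \prox_{t^{(i)} r}(x_k - t^{(i)} g(x_k))$. I then split
\begin{align*}
h_k(x_{k+1}) - h_k(x_k) = \big[h_k(x_{k+1}) - h_k(\bar x^{(i_k)})\big] + \big[h_k(\bar x^{(i_k)}) - h_k(x_k)\big],
\end{align*}
where $i_k$ is the random index with $t_k = t^{(i_k)}$.

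\textbf{Step 3: the second bracket (main obstacle).} Set $\psi_i \triangleq h_k(\bar x^{(i)}) - h_k(x_k)$. Since both points are $\sF_k$-measurable, $\E[\psi_i\mid\sF_k]=0$ for each fixed $i$. The obstacle is that the index is selected by the samples themselves, so this zero mean cannot be used directly; instead I bound $\E[\max_i|\psi_i|\mid\sF_k]$.
\begin{itemize}
\item Write $\psi_i = \int_0^1\langle g - \tg_k,\ \bar x^{(i)}-x_k\rangle$, with $g-\tg_k$ evaluated along the segment from $x_k$ to $\bar x^{(i)}$.
\item Using Assumption~\ref{ass: var}, this gives $\E[\psi_i^2\mid\sF_k]\le \tfrac{\sigma^2}{N_k}\|\bar x^{(i)}-x_k\|^2$.
\item Lemma~\ref{lm:bounds_G}\ref{lem3.i} applied to the true residuals gives $\|\bar x^{(i)}-x_k\| = t^{(i)}\|G_{t^{(i)}}(x_k)\| \le s\|G_s(x_k)\|$.
\end{itemize}
Therefore
\begin{align*}
\E[\max_i|\psi_i|\mid\sF_k] \le \Big(\textstyle\sum_{i}\E[\psi_i^2\mid\sF_k]\Big)^{1/2} \le \sqrt{M}\, s\,\|G_s(x_k)\|\,\sigma/\sqrt{N_k}.
\end{align*}
Young's inequality bounds this by $\tfrac{\alpha c}{4}\|G_s(x_k)\|^2 + \tfrac{M s^2\sigma^2}{\alpha c N_k}$, which is the last term of $\Gamma$. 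I expect this step to be the crux: without the finite set of possible stepsizes one cannot avoid a bounded-gradient assumption.

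\textbf{Step 4: the first bracket.} By nonexpansiveness of the prox, $\|x_{k+1}-\bar x^{(i_k)}\|\le t_k\|\tg_k(x_k)-g(x_k)\|\le s\|\tg_k(x_k)-g(x_k)\|$. Moreover, $\nabla h_k$ is $2L$-Lipschitz on $\dom(r)$, since $\nabla f$ and $\tg_k$ are each $L$-Lipschitz. A first-order expansion of $h_k$ about $\bar x^{(i_k)}$ therefore gives a linear part plus a quadratic part.
\begin{itemize}
\item The quadratic part is at most $L\|x_{k+1}-\bar x^{(i_k)}\|^2 \le Ls^2\|\tg_k(x_k)-g(x_k)\|^2$, whose conditional expectation is at most $Ls^2\sigma^2/N_k$.
\item The linear part is at most $s\|\tg_k(x_k)-g(x_k)\|\,\|\nabla h_k(\bar x^{(i_k)})\|$. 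I will route it through the same max-over-$M$-points device as in Step 3, giving the $\sqrt{M}\,s\,\sigma^2/N_k$ contribution.
\end{itemize}
If $\|\nabla h_k(\bar x^{(i_k)})-\nabla h_k(x_k)\|$ produces an extra $\|G_s(x_k)\|$ factor, I would absorb it by enlarging the Young split in Step 3. I have not yet checked that this keeps the constant $\tfrac{\alpha c}{4}$ exactly.

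\textbf{Step 5: conclusion.} Summing Steps 1, 3 and 4 gives
\begin{align*}
\E[\phi(x_{k+1})-\phi(x_k)\mid\sF_k] \le -\tfrac{\alpha c}{4}\|G_s(x_k)\|^2 + \Gamma\sigma^2/N_k .
\end{align*}
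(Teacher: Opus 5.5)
Your proposal is correct and follows the paper's proof essentially step for step: the same Armijo/residual-monotonicity reduction to $-\tfrac{\alpha c}{2}\|G_s(x_k)\|^2+\alpha c\,\sigma^2/N_k$, and the same $\mathcal{F}_k$-measurable comparison points (the paper's $y_k(t)=\prox_{tr}(x_k-tg(x_k))$, $t\in\mathcal{T}$). It also uses the same sum-over-$\mathcal{T}$ second-moment bound plus Young's inequality for $h_k(y_k(t_k))-h_k(x_k)$, and the same $2L$-smoothness and prox-nonexpansiveness bound for $h_k(x_{k+1})-h_k(y_k(t_k))$, giving exactly $\Gamma$.

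The hedge at the end of your Step 4 is unnecessary. Because each $y_k(t)$ is $\mathcal{F}_k$-measurable, conditional Cauchy--Schwarz with $\|\nabla h_k(y_k(t_k))\|^2\le\sum_{t\in\mathcal{T}}\|\nabla h_k(y_k(t))\|^2$ gives the linear term $\sqrt{|\mathcal{T}|}\,s\sigma^2/N_k$ directly, with no $\|G_s(x_k)\|$ factor. So you should not detour through $\nabla h_k(x_k)$, which would alter the constants.
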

\begin{proof}
 From the sampled linesearch condition \eqref{eq.prox.armijo.G}, one has
 \begin{align*}
        & \phi(x_{k+1}) - \phi(x_k) \\
         \, &\le \, -\alpha t_k \left\|\, \tilde G^k_{t_k}(x_k)\, \right\|^2 + \left( f(x_{k+1}) - \tf_k(x_{k+1})\right) - \left( f(x_k) - \tf_k(x_k) \right) \\
        &\le -\alpha c \left\|\, \tilde G^k_{s}(x_k)\, \right\|^2 + \underbrace{\left( f(x_{k+1}) - \tf_k(x_{k+1})\right)}_{\, \triangleq \, h_k(x_{k+1})\,} - \underbrace{\left( f(x_k) - \tf_k(x_k) \right)}_{\, \triangleq \, h_k(x_{k})\,} \\
        &\le - \frac{\alpha c}{2} \left\|\, G_s(x_k)\, \right\|^2 + \alpha c \|g(x_k) - \tg_k(x_k)\|^2 + h_k(x_{k+1}) - h_k (x_k)
 \end{align*}
The second inequality uses $c\le t_k\le s$ from Lemma~\ref{lm.prox:t_lb} and the the monotonicity of the stationarity residual claimed in Lemma~\ref{lm:bounds_G}(i). The third inequality uses Lemma~\ref{lm:bounds_G}(ii) and the definition $h_k(z)\triangleq f(z)-\tf_k(z)$. Taking conditional expectation given $\mathcal{F}_k$, using Assumption~\ref{ass: var} and the $\mathcal{F}_k$-measurability of $x_k$, yields
  \begin{align}
    \E\left[\, \phi(x_{k+1}) - \phi(x_k) \, \mid \, \mathcal{F}_k \, \right] \, \le \, - \frac{\alpha c}{2} \left\|\, G_s(x_k)\, \right\|^2  + \frac{\sigma^2 \alpha c}{N_k} + \E\left[ \,  h_k(x_{k+1}) - h_k (x_k) \, \mid \, \mathcal{F}_k\, \right]. \label{eq.phi decrease}
 \end{align}
We now bound the last term in \eqref{eq.phi decrease}. For each fixed $t>0$, define the $\mathcal{F}_k$-measurable auxiliary point
 \begin{align*}
    y_k(t) \triangleq  \prox_{tr}(x_k - t g(x_k)).
 \end{align*}
Thus, $y_k(t)$ is the proximal update using the true gradient at $x_k$, given by $g(x_k)$. We decompose the error as
\begin{align}\label{ineq:h_k}
    \notag \E\left[ \,  h_k(x_{k+1}) - h_k (x_k) \, \mid \, \mathcal{F}_k\, \right] & = \E\left[ \, h_k(x_{k+1}) - h_k(y_k(t_k))\, \mid \, \mathcal{F}_k\, \right]  \\
        & ~~~+  \E\left[\,   h_k(y_k(t_k)) - h_k (x_k) \, \mid \, \mathcal{F}_k \,\right].
\end{align}
By Assumptions~\ref{ass: F} and~\ref{ass: var}, both $f$ and $\tf_k$ have $L$-Lipschitz continuous gradients on $\dom(r)$, so $h_k$ is $2L$-smooth there. Hence,
 \begin{alignb}
    h_k(x_{k+1}) - h_k(y_k(t_k)) &\le \nabla h_k(y_k(t_k))^\top (x_{k+1} - y_k(t_k)) + L \|x_{k+1} - y_k(t_k)\|^2 \\
&\le \|\nabla h_k(y_k(t_k))\| \|x_{k+1} - y_k(t_k)\| + L \|x_{k+1} - y_k(t_k)\|^2.  \label{eq.hk lipschitz}
 \end{alignb}
Nonexpansiveness of the proximal operator and $t_k\le s$ give
 \begin{align}
    \|x_{k+1} - y_k(t_k)\| \le t_k \|g(x_k) - \tg_k(x_k)\| \le s\|g(x_k) - \tg_k(x_k)\|. \label{eq.yktk diff bound}
 \end{align}
To bound the term $\|\nabla h_k(y_k(t_k))\|$, note that Lemma~\ref{lm.prox:t_lb} and the backtracking rule imply that $t_k$ belongs to the fixed finite set, given by
 \begin{align}\label{df:T}
    \mathcal{T} \, \triangleq\,  \left\{\, s\beta^j \, \mid \, j \in \N \text{ and } s\beta^j \ge c\, \right\},
 \end{align} 
 and the cardinality of $ \mathcal{T}$ is given by $|\mathcal{T}| = 1 + \lfloor \log_{\beta}(c/s)\rfloor$.
 Therefore, 
 \begin{align}
    \left\|\, \nabla h_k(y_k(t_k))\, \right\|^2 \, \le \, \max_{t \in \mathcal{T}} \left\|\, \nabla h_k(y_k(t))\, \right\|^2 \, \le \, \sum_{t \in \mathcal{T}}  \left\|\, \nabla h_k(y_k(t))\, \right\|^2. \label{eq.htk gradient bound}
 \end{align}
Taking conditional expectation given $\mathcal{F}_k$ in \eqref{eq.hk lipschitz}, applying the Cauchy--Schwarz inequality, and using \eqref{eq.yktk diff bound}, \eqref{eq.htk gradient bound}, and $\nabla h_k=g-\tg_k$, we obtain
 \begin{alignb}
    & \E\left[ h_k(x_{k+1}) - h_k(y_k(t_k))| \mathcal{F}_k\right] \\
    &\le  \E\left[  \|\nabla h_k(y_k(t_k))\| \|x_{k+1} - y_k(t_k)\| | \mathcal{F}_k\right] + \E\left[ L \|x_{k+1} - y_k(t_k)\|^2 | \mathcal{F}_k\right] \\
    & \le \sqrt{ \E\left[  \|\nabla h_k(y_k(t_k))\|^2 | \mathcal{F}_k\right] \E\left[  \|x_{k+1} - y_k(t_k)\|^2 | \mathcal{F}_k\right] } + L \E\left[ \|x_{k+1} - y_k(t_k)\|^2 | \mathcal{F}_k\right] \\
    & \le \sqrt{ \E\left[  \sum_{t \in \mathcal{T}}  \|\nabla h_k(y_k(t))\|^2 | \mathcal{F}_k\right] \E\left[  \|x_{k+1} - y_k(t_k)\|^2 | \mathcal{F}_k\right] } + L \E\left[ \|x_{k+1} - y_k(t_k)\|^2 | \mathcal{F}_k\right] \\
    & \le \sqrt{ \left(  \sum_{t \in \mathcal{T}}  \E[\|\nabla h_k(y_k(t))\|^2 | \mathcal{F}_k]\right) s^2\E\left[  \|g(x_k) - \tg_k(x_k)\|^2 | \mathcal{F}_k\right] } + Ls^2 \E\left[ \|g(x_k) - \tg_k(x_k)\|^2| \mathcal{F}_k\right] \\
    & \le \sqrt{ \left(  \sum_{t \in \mathcal{T}}  \underbrace{\E[\|g(y_k(t)) - \tg_k(y_k(t))\|^2 | \mathcal{F}_k]}_{\, \le \, \sigma^2/N_k}\right) s^2\underbrace{\E\left[  \|g(x_k) - \tg_k(x_k)\|^2 | \mathcal{F}_k\right]}_{\, \le \, \sigma^2/N_k} } + Ls^2 \E\left[ \|g(x_k) - \tg_k(x_k)\|^2| \mathcal{F}_k\right] \\
    & \le \frac{\sqrt{ |\mathcal{T}|} s\sigma^2}{N_k}  + \frac{Ls^2\sigma^2}{N_k}. \label{eq.hkyktk xkp1 bound}
 \end{alignb}
 We now bound the second term in \eqref{ineq:h_k}, given by $\E\left[ h_k(y_k(t_k)) - h_k(x_k) \mid \mathcal{F}_k\right]$. For each fixed $t \in \mathcal{T}$, the points $x_k$ and $y_k(t)$ are $\mathcal{F}_k$-measurable and belong to $\dom(r)$. Hence, $(1-u)x_k+u y_k(t)$ is $\mathcal{F}_k$-measurable for each fixed $u \in [0,1]$ and belongs to $\dom(r)$ by convexity of dom$(r)$. Since the current batch is independent of $\mathcal{F}_k$ and $\nabla h_k=g-\tg_k$, Assumption~\ref{ass: var} gives
 \begin{align*}
     \E\left[\|\nabla h_k((1-u)x_k+u y_k(t))\|^2 \mid \mathcal{F}_k\right]
     \le \frac{\sigma^2}{N_k}
 \end{align*}
for any fixed $t \in \mathcal{T}$ and $u \in [0,1]$. Let $z_k(u; y_k(t), x_k) \triangleq h_k((1-u)x_k+u y_k(t))$, and then by the chain rule, $z_k'(u; y_k(t), x_k) = \nabla h_k((1-u)x_k+u y_k(t)) ^\top (y_k(t) - x_k)$. By the fundamental theorem of calculus, one has
 \begin{align*}
     h_k(y_k(t))-h_k(x_k) & = z_k(1; y_k(t), x_k) - z_k(0; y_k(t), x_k) \\ 
     & = \int_0^1 z_k'(u; y_k(t), x_k)\, du\\
     & = \int_0^1 \nabla h_k((1-u)x_k+u y_k(t))^\top (y_k(t)-x_k)\,du.
 \end{align*}
 Applying Jensen's inequality on the integral and then the Cauchy-Schwarz inequality to the inner product in the integrand  yields
 \begin{align*}
     |h_k(y_k(t))-h_k(x_k)|^2
     &\le \int_0^1
         |\nabla h_k((1-u)x_k+u y_k(t))^\top (y_k(t)-x_k)|^2\,du \\
     &\le \|y_k(t)-x_k\|^2
         \int_0^1 \|\nabla h_k((1-u)x_k+u y_k(t))\|^2\,du.
 \end{align*}
Taking conditional expectation and using $y_k(t)-x_k=-tG_t(x_k)$ together with Lemma~\ref{lm:bounds_G}(i), we obtain
 \begin{alignb}
     &\E[|h_k(y_k(t))-h_k(x_k)|^2 \mid \mathcal{F}_k] \\
     & \le \E\left[\,\|y_k(t)-x_k\|^2
         \int_0^1 \|\nabla h_k((1-u)x_k+u y_k(t))\|^2\,du \, \mid \, \mathcal{F}_k \,\right]\\
    &  \, = \, \|y_k(t)-x_k\|^2\E\left[\,
         \int_0^1 \|\nabla h_k((1-u)x_k+u y_k(t))\|^2\,du \, \mid \, \mathcal{F}_k \,\right]\\
     &\overset{\rm Fubini}{\, = \,} \left\|\, y_k(t)-x_k\, \right\|^2 \int_0^1
         \E\left[\, \left\|\, \nabla h_k((1-u)x_k+u y_k(t))\, \right\|^2\, \mid \, \mathcal{F}_k\, \right]\,du \\
     &=  t^2 \left\|\, G_t(x_k)\, \right\|^2  \int_0^1
         \E\left[\, \left\|\, \nabla h_k((1-u)x_k+u y_k(t))\, \right\|^2\, \mid \, \mathcal{F}_k\, \right]\,du  \\
     &\le \frac{\sigma^2 s^2}{N_k}\left\|\, G_s(x_k)\, \right\|^2.
     \label{eq.hk fixed step second moment}
 \end{alignb}
 Now, using $t_k \in \mathcal{T}$, Jensen's inequality, and \eqref{eq.hk fixed step second moment}, one has
 \begin{alignb}
     \E\left[\, h_k(y_k(t_k))-h_k(x_k) \mid \mathcal{F}_k\, \right] 
     &\le \E\left[\, \max_{t \in \mathcal{T}} \left|\, h_k(y_k(t))-h_k(x_k)\, \right| \mid \mathcal{F}_k\, \right] \\
     &\le \left(\E\left[\max_{t \in \mathcal{T}} |h_k(y_k(t))-h_k(x_k)|^2
         \mid \mathcal{F}_k\right]\right)^{1/2} \\
     &\le \left(\E\left[\sum_{t \in \mathcal{T}} |h_k(y_k(t))-h_k(x_k)|^2
         \mid \mathcal{F}_k\right]\right)^{1/2} \\
         &\overset{\eqref{eq.hk fixed step second moment}}{\le} s\|G_s(x_k)\|\sqrt{\frac{|\mathcal{T}|\sigma^2}{N_k}} \\
     &\le \frac{\alpha c}{4}\|G_s(x_k)\|^2
         + \frac{|\mathcal{T}|\sigma^2 s^2}{\alpha c N_k},
     \label{eq.hkyktk bound}
 \end{alignb}
 where in the last inequality we use Young's inequality ,given by $ab \le \delta a^2 + b^2/(4\delta)$ for any $a,b \in \R$ and $\delta > 0$. 
 Now, combining \eqref{eq.phi decrease}, \eqref{eq.hkyktk xkp1 bound}, and \eqref{eq.hkyktk bound}, we obtain 
\begin{align*}
        \E\left[\, \phi(x_{k+1}) - \phi(x_k) \, \mid \, \mathcal{F}_k\, \right ] &\le - \frac{\alpha c \left\|\, G_s(x_k)\, \right\|^2 }{2}  + \frac{\sigma^2 \alpha  c}{N_k} + \frac{\sqrt{ |\mathcal{T}|} s\sigma^2}{N_k}  + \frac{Ls^2\sigma^2}{N_k} + \frac{\alpha c \|G_{s}(x_k)\|^2}{4} + \frac{| \mathcal{T}| \sigma^2 s^2}{\alpha c N_k} \\
        & = - \frac{\alpha  c \left\|\, G_s(x_k)\, \right\|^2 }{4} + \left(\alpha c + \sqrt{ |\mathcal{T}|} s + Ls^2 + \frac{| \mathcal{T}| s^2}{\alpha c}  \right) \frac{\sigma^2}{N_k}.
\end{align*}
The result follows by substituting $|\mathcal{T}| = 1 + \lfloor \log_{\beta}(c/s)\rfloor$ and using $\Gamma$ defined in the lemma.
\end{proof}

We observe that \texttt{ChatGPT} suggested two proof ideas that we had not previously considered: (i) the use of a deterministic finite set containing all possible values of the random stepsize $t_k$ to bound sampling errors over that set; and (ii) exploiting the $2L$-Lipschitz continuity of the gradient of $f-\tf_k$. The above proof represents a version of the proof that has been modified to provide clarity. The following theorem follows directly from Lemma~\ref{lem.gptproof}, providing convergence guarantees and complexity bounds comparable to those in \cite[Theorem~1]{Wang2025} but does so under weaker assumptions; specifically, we require neither the boundedness of true gradient map nor the Lipschitz continuity of $r$.
\begin{theorem}\label{thm:convergence}
    Consider the sequences generated by Algorithm~\ref{alg: sls pk prox} and suppose that Assumptions~\ref{ass:prox}, \ref{ass: F}, and \ref{ass: var} hold. Let $\Gamma$ be defined as in Lemma~\ref{lem.gptproof}.
    For any $K \in \N\setminus\{0\}$, choose $N_k = \lceil c_b K \rceil$, where $c_b > 0$. If $R_K$ is uniformly distributed on $\{0,1,\ldots,K-1\}$ and independent of the generated iterates, then
    \begin{align*}
    \E\left[\, \left\|\, G_s(x_{R_K})\, \right\|^2 \, \right]
    &\le \frac{4}{\alpha cK}\left(\phi(x_0) - \phi_{\inf} + \frac{\Gamma\sigma^2}{c_b}\right).
\end{align*}
Moreover, for any real number $\epsilon > 0$, we define the iteration complexity of computing an $\epsilon$-solution as the smallest number of iterations $K(\epsilon)$ that guarantees
\begin{align}\label{eq.convergence measure}
    \E\left[ \, \left\| \,  G_{s}(x_{R_{K(\epsilon)}})  \, \right\|\, \right] \, \le \, \epsilon,
\end{align}
while the sample complexity, denoted by  $S(\epsilon) \, \triangleq \, {\displaystyle \sum_{k=0}^{K(\epsilon)-1}} N_k$, represents the number of samples used to achieve the stated accuracy guarantee~\eqref{eq.convergence measure}. Then
\begin{align*}
K(\epsilon) \, &\le \, \frac{4\left(\phi(x_0) - \phi_{\inf} + \Gamma \sigma^2/c_b\right)}{\alpha c \epsilon^2} + 1
= \mathcal{O}(\epsilon^{-2}), \text{ and }\\
S(\epsilon) \, &\le \, c_b\left(\frac{4\left(\phi(x_0) - \phi_{\inf} + \Gamma \sigma^2/c_b\right)}{\alpha c \epsilon^2} + 1\right)^2  + \frac{4\left(\phi(x_0) - \phi_{\inf} + \Gamma \sigma^2/c_b\right)}{\alpha c \epsilon^2} + 1
= \mathcal{O}(\epsilon^{-4}).
\end{align*}
\end{theorem}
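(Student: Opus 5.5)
We plan to telescope the one-step descent inequality of Lemma~\ref{lem.gptproof}. With $N_k = \lceil c_b K\rceil \ge c_b K$, the error term satisfies $\Gamma\sigma^2/N_k \le \Gamma\sigma^2/(c_b K)$. We take total expectations in Lemma~\ref{lem.gptproof}, using the tower property over $\mathcal{F}_k$, and sum over $k=0,\dots,K-1$. This gives
\begin{align*}
\frac{\alpha c}{4}\sum_{k=0}^{K-1}\E\left[\|G_s(x_k)\|^2\right] \le \phi(x_0) - \E[\phi(x_K)] + \frac{\Gamma\sigma^2}{c_b}.
\end{align*}
Every iterate lies in $\dom(r)$, since it is the output of a proximal step of $r$. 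By Assumption~\ref{ass:prox}(iii) we therefore have $\E[\phi(x_K)]\ge \phi_{\inf}$. Note that $x_0$ is deterministic, or at least $\sF_0$-measurable, so $\phi(x_0)$ passes through the expectation.

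Next, $R_K$ is uniform and independent of the iterates, so $\E[\|G_s(x_{R_K})\|^2] = \frac{1}{K}\sum_{k=0}^{K-1}\E[\|G_s(x_k)\|^2]$. Dividing the telescoped bound by $\alpha cK/4$ yields the first claim. A minor technical point is integrability of $\phi(x_k)$. The conditional inequality in Lemma~\ref{lem.gptproof} already presumes it, and we can handle it inductively: $\phi(x_{k+1})$ is bounded below by $\phi_{\inf}$, and its conditional expectation is bounded above.

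For the complexity bounds, we apply Jensen's inequality: $\E[\|G_s(x_{R_K})\|] \le \sqrt{\E[\|G_s(x_{R_K})\|^2]}$. Hence \eqref{eq.convergence measure} holds as soon as
\begin{align*}
\frac{4}{\alpha c K}\left(\phi(x_0)-\phi_{\inf}+\frac{\Gamma\sigma^2}{c_b}\right)\le \epsilon^2.
\end{align*}
The constant $\Gamma$ does not depend on $K$. This is exactly why the period lengths $\{p_j\}$ play no role: Lemma~\ref{lm.prox:t_lb} holds for arbitrary periods. So the smallest admissible $K$ is at most $\lceil A/\epsilon^2\rceil \le A/\epsilon^2 + 1$, where $A \triangleq 4(\phi(x_0)-\phi_{\inf}+\Gamma\sigma^2/c_b)/(\alpha c)$.

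Finally, $S(\epsilon) = K(\epsilon)\lceil c_b K(\epsilon)\rceil \le c_b K(\epsilon)^2 + K(\epsilon)$. Substituting the bound on $K(\epsilon)$ gives the stated expression, and hence $\mathcal{O}(\epsilon^{-4})$.

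There is no real obstacle, since all the difficulty was absorbed into Lemma~\ref{lem.gptproof}. The only subtle point is the dependence of $N_k$ on the budget $K$. The inequality is established for the run with budget $K$, and $K(\epsilon)$ is defined via that family of runs. So we must argue with the explicit sufficient $K$ above rather than with monotonicity in $K$.
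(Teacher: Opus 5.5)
Your proposal is correct and follows the paper's proof: you take total expectations in Lemma~\ref{lem.gptproof}, telescope using $\sum_{k=0}^{K-1} 1/N_k \le 1/c_b$, average via $R_K$, apply Jensen, take the smallest integer $K \ge A/\epsilon^2$, and bound $S(\epsilon) = K(\epsilon)\lceil c_b K(\epsilon)\rceil \le c_b K(\epsilon)^2 + K(\epsilon)$. Your added remarks fill in details the paper leaves implicit, namely that the iterates lie in $\dom(r)$ so $\phi(x_K)\ge\phi_{\inf}$, and the inductive integrability of $\phi(x_k)$.
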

\begin{proof}
The first result follows by taking total expectation in Lemma~\ref{lem.gptproof}, rearranging, summing over $k=0,\ldots,K-1$, and using
\begin{align*}
\sum_{k=0}^{K-1} \frac{1}{N_k} \le \sum_{k=0}^{K-1} \frac{1}{c_b K} = \frac{1}{c_b}.
\end{align*}
To derive the iteration complexity, we invoke Jensen's inequality, whereby
\begin{align*}
    \E\left[\left\|G_s(x_{R_K})\right\|\right]
    &\le \sqrt{\E\left[\left\|G_s(x_{R_K})\right\|^2\right]} \le \sqrt{\frac{4\left(\phi(x_0) - \phi_{\inf} + \Gamma \sigma^2/c_b\right)}{\alpha c K}}.
\end{align*}
Thus, for any positive integer $K$, satisfying
\begin{align*}
    K \ge \frac{4\left(\phi(x_0) - \phi_{\inf} + \Gamma \sigma^2/c_b\right)}{\alpha c \epsilon^2}
\end{align*}
we have that
\eqref{eq.convergence measure} holds. Choosing the smallest positive integer satisfying this inequality gives the required upper bound on $K(\epsilon)$. Finally, using the iteration budget $K(\epsilon)$, we have
\begin{align*}
    S(\epsilon)
    &= K(\epsilon)\left\lceil c_b K(\epsilon)\right\rceil \le c_b K(\epsilon)^2+K(\epsilon).
\end{align*}
Substituting the bound on $K(\epsilon)$ yields the sample complexity.
\end{proof}

\section{Conclusion} \label{sec:4}

Our analysis extends the theoretical applicability of \textbf{SLAM}~\cite{Wang2025} by removing the assumptions of bounded true gradients and Lipschitz continuity of the nonsmooth term while preserving the original complexity orders under the remaining assumptions. \texttt{ChatGPT} assisted in developing the main idea of proof of Lemma~\ref{lem.gptproof}. This modified proof addresses the dependence of the accepted stepsize on the current sample batch by bounding sampling errors over a fixed finite set of possible backtracking stepsizes and comparing stochastic proximal updates with their {true counterparts that leverage true gradients}. With mini-batch sizes proportional to the iteration budget, the resulting iteration and sample complexities are $\mathcal{O}(\epsilon^{-2})$ and $\mathcal{O}(\epsilon^{-4})$, respectively, for obtaining an iterate whose expected proximal-gradient residual norm is at most $\epsilon$. These guarantees hold for any prescribed sequence of positive integer stepsize-resetting periods, allowing greater flexibility in period selection without modifying the algorithm.

\bibliographystyle{plainnat}
\bibliography{ref_nobgproof}
\end{document}